\newtheorem{theorem}{Theorem}
\newtheorem{lemma}[theorem]{Lemma}
\newtheorem{prop}[theorem]{Proposition}
\theoremstyle{remark}
\newcommand{\R}{\mathbb R}
\newcommand{\C}{\mathbb C}
\newcommand{\Z}{\mathbb Z}
\newcommand{\Q}{\mathbb Q}
\newcommand{\F}{\mathbb F}
\newcommand{\A}{\mathbb A}
\newcommand{\bH}{\mathbb H}
\newcommand{\cO}{\mathcal O}
\newcommand{\cI}{\mathcal I}
\newcommand{\be}{\begin{equation}}
\newcommand{\ee}{\end{equation}}
\newcommand{\bes}{\begin{equation*}}
\newcommand{\ees}{\end{equation*}}
\newcommand{\ba}{\begin{eqnarray}}
\newcommand{\ea}{\end{eqnarray}}
\newcommand{\bas}{\begin{eqnarray*}}
\newcommand{\eas}{\end{eqnarray*}}
\title{Local bounds for $L^p$ norms of Maass forms in the level aspect}
\author{Simon Marshall}
\begin{document}

\begin{abstract}

We apply techniques from harmonic analysis to study the $L^p$ norms of Maass forms of varying level on a quaternion division algebra.  Our first result gives a candidate for the local bound for the sup norm in terms of the level, which is new when the level is not squarefree.  The second result is a bound for $L^p$ norms in the level aspect that is analogous to Sogge's theorem on $L^p$ norms of Laplace eigenfunctions.

\end{abstract}

\maketitle

\section{Introduction}

Let $\phi$ be a cuspidal newform of level $\Gamma_0(N)$ on $GL_2/\Q$ or a quaternion division algebra over $\Q$, which we shall assume is $L^2$-normalised with respect to the measure that gives $\Gamma_0(N) \backslash \bH^2$ mass 1.  There has recently been interest in bounding the sup norm $\| \phi \|_\infty$ in terms of $N$ and the infinite component of $\phi$, see \cite{BH, HT1, HT2, Sa, T3, T1, T2}.  The `trivial' bound in the level aspect (with the infinite component remaining bounded) is generally considered to be $\| \phi \|_\infty \ll_\epsilon N^{1/2 + \epsilon}$, provided $N$ is squarefree; see \cite{AU} or any of the previously cited papers.  Our first result is a candidate for the generalisation of this to arbitrary $N$.

\begin{theorem}
\label{level}

Let $D/\Q$ be a quaternion division algebra that is split at infinity.  Let $\phi$ be an $L^2$-normalised newform of level $K_0(N)$ on $PGL_1(D)$.  Assume that $\phi$ is spherical at infinity with spectral parameter $t$.  Let $N_0 \ge 1$ be the smallest number with $N | N_0^2$.  We have

\bes
\| \phi \|_\infty \ll (1 + |t|)^{1/2} N_0^{1/2} \prod_{p | N} (1 + 1/p)^{1/2}.
\ees

\end{theorem}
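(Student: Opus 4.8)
The plan is to prove the bound by an amplification-free pre-trace inequality, taking as test function the product over the ramified places of the local matrix coefficients of $\phi$ together with an archimedean concentrating kernel. Since $D$ is a division algebra the relevant quotient is compact, the spectrum is purely discrete, and there is no Eisenstein contribution, so the positivity argument is completely clean. Concretely I would realize $\phi$ as the adelic newform generating $\pi = \otimes_v \pi_v$, choose a pure tensor $w = \prod_v w_v \in C_c^\infty(PGL_1(D)(\A))$ with $\pi_v(w_v) v_v = c_v v_v$ for each $v$ (so that $R(w)\phi = (\prod_v c_v)\phi$), and set $f = w^* * w$, whence $\pi'(f) \ge 0$ for every unitary $\pi'$. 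Positivity of the automorphic kernel then yields
\begin{equation*}
\Big| \prod_v c_v \Big|^2 |\phi(g)|^2 \le \sum_{\gamma \in \Gamma} f(g^{-1}\gamma g),
\end{equation*}
and since $f(\mathrm{id}) = \prod_v \|w_v\|_2^2$ the problem reduces to (i) choosing the $w_v$ so that the local ratio $\|w_v\|_2^2 / |c_v|^2$ is as small as possible, and (ii) showing the geometric side is dominated by its identity term. The resulting estimate is morally $\|\phi\|_\infty^2 \ll \prod_v \mu_v(\pi_v)$, a product of local Plancherel densities.

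For the archimedean place I would take the classical bi-$K_\infty$-invariant kernel whose spherical transform is nonnegative, bounded below on a unit window around the spectral parameter $t$, and supported there; this gives $\|w_\infty\|_2^2/|c_\infty|^2 \ll 1+|t|$ and localizes $\gamma$ to a ball of radius $O(1)$ at infinity. The heart of the matter is the choice at each prime $p \mid N$, where $D_p \cong M_2(\Q_p)$ and $\pi_p$ is ramified of conductor $p^{n_p}$ with new vector $v_p$ fixed by $K_0(p^{n_p})$. Here I would take $w_p$ proportional to a truncation of the matrix coefficient $\Phi_p(g) = \langle \pi_p(g) v_p, v_p\rangle$ to a ball of tree-radius $\sim \lceil n_p/2\rceil$. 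The local claim to establish is that
\begin{equation*}
\frac{\|w_p\|_2^2}{|c_p|^2} \asymp p^{\lceil n_p/2 \rceil}\big(1 + 1/p\big),
\end{equation*}
which for $\pi_p$ in the discrete series is the formal degree $d_{\pi_p}$ (via Schur orthogonality), and in general is the corresponding Plancherel density; in all cases the value $p^{\lceil n_p/2\rceil}(1+1/p)$ comes out of an explicit new-vector computation. Taking square roots and multiplying over $p \mid N$ produces precisely $N_0^{1/2}\prod_{p\mid N}(1+1/p)^{1/2}$. The appearance of $\lceil n_p/2\rceil$ rather than $n_p$ is the whole point: the matrix coefficient of the new vector is concentrated at the scale $p^{\lceil n_p/2\rceil}$, which is genuinely smaller than the $p^{n_p}$ scale of the crude projector $\mathbf 1_{K_0(p^{n_p})}$ that would only recover the index bound $N^{1/2}$.

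The main obstacle I expect is the uniform control of the off-diagonal geometric terms. Because $\Phi_p$ is not compactly supported, one must first check, using explicit formulas for new-vector matrix coefficients (equivalently the local functional equation or Macdonald-type computations), that truncation to radius $\sim \lceil n_p/2\rceil$ captures the bulk of the relevant $L^2$ mass and hence does not spoil the displayed local estimate. The support of $f_p = w_p^* * w_p$ then has volume $\asymp p^{n_p}$, so a priori many lattice points $\gamma$ can meet it, and the task is to show that the contribution of the non-central $\gamma$ is $\ll f(\mathrm{id})$. This is exactly where the division-algebra hypothesis is essential: the units of the order are discrete and cocompact with no unipotent or small elements, so distinct $\gamma$ are Diophantine-separated, and the archimedean restriction to a ball of radius $O(1)$ together with the $p$-adic support condition leaves only $O(1)$ genuinely contributing classes. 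Making this count uniform over all $g$, and in particular ruling out a secondary main term from near-central conjugates of size comparable to the diagonal, is the technical crux on which the whole argument rests.
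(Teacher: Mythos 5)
Your overall framework --- a nonnegative test function $f = w^{\vee} * w$ built from local matrix coefficients of the newform, an archimedean kernel with nonnegative Harish--Chandra transform concentrated near $t$, a pre-trace inequality, and reduction to the local ratios $\|w_v\|_2^2/|c_v|^2$ --- is the same as the paper's, and your identification of $p^{\lceil n_p/2\rceil}$ (rather than $p^{n_p}$) as the correct local scale is exactly the point of the theorem. But your implementation has a genuine gap at the step you yourself call the crux, and the paper's proof is engineered precisely so that this step never arises. You truncate the newvector matrix coefficient to a ball of tree-radius $\sim \lceil n_p/2\rceil$, so the support of $f_p = w_p^{\vee} * w_p$ is a union of $K_p$-double cosets of total volume $\approx p^{n_p}$, and the adelic support of $f$ has volume growing like $N$. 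The number of $\gamma \in G(\Q)$ with $x^{-1}\gamma x$ in this support is then \emph{not} $O(1)$: lifting to the maximal order $\cO$, one is counting elements of $\cO$ with reduced norm dividing roughly $N_0^2$ and archimedean part in a fixed compact set, and this count is polynomial in $N$. Discreteness and cocompactness alone cannot reduce it to $O(1)$; to control the off-diagonal sum one would need decay estimates for the truncated newvector matrix coefficients at each such $\gamma$, together with a weighted Diophantine count --- essentially the hard technical content of the sup-norm literature in the level aspect --- none of which your proposal supplies. So the assertion that the support conditions "leave only $O(1)$ genuinely contributing classes" is unjustified, and as a statement about the number of $\gamma$ meeting the support it is false. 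Your local estimate $\|w_p\|_2^2/|c_p|^2 \asymp p^{\lceil n_p/2\rceil}(1+1/p)$ is likewise asserted rather than proved.

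The paper avoids both problems with one trick you are missing: conjugation. It replaces $\phi$ by $\phi' = \pi(g)\phi$ with $g_p = \mathrm{diag}(N_p, N_{0,p})$, so that $\phi'$ is invariant under the balanced subgroup $K(N_0, N/N_0)$, and takes as finite test function the matrix coefficient of $\phi'$ restricted to the maximal compact $K$ itself, extended by zero. In your language: the right truncation is not a ball but the conjugate $gKg^{-1}$ of the maximal compact, which is a \emph{group}. This buys two things at once. First, Schur orthogonality (\ref{innerprod}) on the compact group $K$ gives the exact idempotent identity $k_f = \dim V \, k_f * k_f$ and the eigenvalue bound $\lambda^{-1} \le \dim V$, where $V$ is the $K$-representation generated by $\phi'$; the bound $\dim V \le N_0 \prod_{p|N}(1+1/p)$ then comes from irreducibility (uniqueness of the newvector, Lemma \ref{dimension}) plus Silberger's dimension estimates --- this replaces your unproved local computation. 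Second, the support of $k$ is contained in $B \times K$, a fixed compact set independent of $N$, so compactness of $X$ immediately bounds the number of nonzero geometric terms by $O(1)$ uniformly in $x$, each term being $\ll \|k_\infty\|_\infty \ll 1+|t|$ because the finite part, being a matrix coefficient of unit vectors, is bounded by $1$. If you want to rescue your outline, the cleanest fix is exactly this substitution of the vertex-stabilizer $gKg^{-1}$ for the ball; after that change your argument becomes the paper's proof.
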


Notation is standard, and specified below.  When $t$ is bounded, the Theorem gives a bound of $N^{1/2 + \epsilon}$ for $N$ squarefree, but roughly $N^{1/4 + \epsilon}$ for powerful $N$.  While the Theorem is restricted to compact quotients, $\mathsection$\ref{noncomp} gives a weaker result in the case of $PGL_2/\Q$.

Our second result is the analogue in the level aspect of a classical theorem of Sogge \cite{So}, which we now recall.  Let $M$ be a compact Riemannian surface with Laplacian $\Delta$, and let $\psi$ be a function on $M$ satisfying $(\Delta + \lambda^2)\psi = 0$ and $\| \psi \|_2 = 1$.  Define $\delta : [2,\infty] \rightarrow \R$ by

\begin{equation}
\label{delta0}
\delta(p) = \bigg\{ \begin{array}{ll} \tfrac{1}{2} - \tfrac{2}{p} , & 0 \le \tfrac{1}{p} \le \tfrac{1}{6}, \\
\tfrac{1}{4} - \tfrac{1}{2p}, & \tfrac{1}{6} \le \tfrac{1}{p} \le \tfrac{1}{2}.
\end{array}
\end{equation}
Sogge's theorem states that

\be
\label{eval}
\| \psi \|_p \ll \lambda^{\delta(p)} \quad \text{for} \quad 2 \le p \le \infty.
\ee
In particular, this is stronger than the bound obtained by interpolating between bounds for the $L^2$ and $L^\infty$ norms.  Our next theorem demonstrates that something similar is possible in the level aspect.

\begin{theorem}
\label{Sogge}

Let $D/\Q$ be a quaternion division algebra that is split at infinity.  Let $\phi$ be an $L^2$-normalised newform of level $K_0(q^2)$ on $PGL_1(D)$, where $q$ is a prime.  Assume that $\phi$ is principal series at $q$.  Assume that $\phi$ is spherical at infinity with spectral parameter $t$, and that $|t| \le A$ for some $A > 0$.  We have

\bes
\| \phi \|_p \ll_A q^{\delta(p)}.
\ees

\end{theorem}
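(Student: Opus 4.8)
The plan is to follow the architecture of Sogge's own proof: reduce the full range $2 \le p \le \infty$ to the two endpoint estimates at $p = \infty$ and at the critical exponent $p = 6$, and interpolate these against the trivial bound $\|\phi\|_2 = 1$. The case $p = \infty$ is already supplied by Theorem \ref{level}: for $N = q^2$ (so $N_0 = q$) and $|t| \le A$ it gives $\|\phi\|_\infty \ll_A q^{1/2} = q^{\delta(\infty)}$. Since $\delta$ in \eqref{delta0} is piecewise linear in $1/p$ with its single break at $p = 6$, interpolating $(L^2,L^6)$ on $[2,6]$ and $(L^6,L^\infty)$ on $[6,\infty]$ reproduces $\delta(p)$ exactly on both subintervals. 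Thus the whole theorem rests on the single critical estimate
\[
\|\phi\|_6 \ll_A q^{1/6}.
\]

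To prove the critical estimate I would set up a reproducing operator and run a $TT^*$ argument. Choose a symmetric test function $f = f_\infty \otimes f_q \otimes \bigotimes_{p \ne q} \mathbf{1}_{K_p}$, where $f_\infty$ is a point-pair invariant concentrated at the bounded spectral parameter $t$ and normalised to act as the identity on the spherical line at infinity, and $f_q$ is the (normalised) matrix coefficient of the $K_0(q^2)$-newvector inside the principal series $\pi_q$, so that the associated self-adjoint convolution operator $P$ on $L^2(X)$ is, up to scalar, the orthogonal projection onto the $\pi_q$-new part; here $X = \Gamma \backslash \bH^2$ is the relevant compact arithmetic surface. By construction $P\phi = \phi$, hence $\|\phi\|_6 = \|P\phi\|_6 \le \|P\|_{L^2 \to L^6}$, and by $TT^*$ (using $P = P^*$) this reduces to bounding $\|P^2\|_{L^{6/5} \to L^6}$. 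The kernel of $P^2$ is the automorphic kernel $K(x,y) = \sum_{\gamma \in \Gamma} (f \ast f)(x^{-1}\gamma y)$, which I would split into the contribution of $\gamma = e$, a genuinely local convolution operator on $X$, and the remaining arithmetic terms.

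For the arithmetic terms I would exploit that $D$ is a division algebra, so nonidentity elements of $\Gamma$ are uniformly bounded away from the identity; together with the $q$-adic support constraint imposed by $f_q \ast f_q$, a lattice-point count bounds the off-diagonal contribution to $\|P^2\|_{L^{6/5} \to L^6}$ by a quantity of size $q^{o(1)}$, negligible against the target $q^{1/3}$. The heart of the matter is therefore the \emph{local} estimate: the $L^{6/5} \to L^6$ operator norm on $PGL_2(\Q_q)$ of convolution by $f_q$, transported to $X$ through the covering $X \to X_0$ (of degree about $q^2$) of the level-one quotient. I expect this to be of size $q^{1/3}$, whence $\|P\|_{L^2 \to L^6} \ll q^{1/6}$.

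This local estimate is where the main difficulty lies, and it is the exact arithmetic counterpart of the restriction/extension step in Sogge's proof. One must analyse the newvector of the ramified principal series $\pi_q$ on the $(q+1)$-regular Bruhat--Tits tree, control its spread under the Cartan decomposition, and extract the cancellation playing the role of the curvature of the cotangent sphere in the archimedean setting. The critical exponent $p = 6$ should emerge from balancing the two extreme profiles of such a vector — a sharply peaked, zonal-type configuration against a maximally spread-out one — so that pinning down the correct $q$-adic analogue of the Knapp example, and checking that it saturates precisely $q^{1/6}$ at $p = 6$, is the crux. The hypothesis $|t| \le A$ enters only through fixed implied constants, so that all of the $q$-dependence is carried by this one local computation.
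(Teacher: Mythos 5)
Your reduction to the critical exponent is fine: by log-convexity of $L^p$ norms against $\|\phi\|_2=1$, and against the sup-norm bound $\|\phi\|_\infty \ll_A q^{1/2}$ from Theorem \ref{level} on the other side, the single estimate $\|\phi\|_6 \ll_A q^{1/6}$ does recover $\delta(p)$ exactly on both ranges, and your $TT^*$ framework with a projector built from the $q$-adic matrix coefficient is the same skeleton the paper uses. But the proposal stops exactly where the proof has to start: you never establish the local estimate, you only announce that you ``expect'' the relevant $L^{6/5}\to L^6$ norm to be $q^{1/3}$ and defer the ``$q$-adic Knapp example'' as the crux. That estimate \emph{is} the theorem; without a mechanism for extracting cancellation from the newvector's matrix coefficient, nothing has been proved beyond the interpolation of $L^2$ and $L^\infty$, which only gives $q^{1/2-1/p}$ (i.e. $q^{1/3}$ at $p=6$, not $q^{1/6}$).

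Here is the mechanism the paper uses, which is absent from your outline. One works not with the newvector but with its balanced translate $\phi'=\pi(g)\phi$, invariant under $K(q,q)$, whose $K$-span $V$ has dimension $q$ or $q+1$ (for the newvector itself the span is much larger and the final numerology degrades). The key input (Lemma \ref{matrix}) is a dichotomy for the matrix coefficient $\langle\pi_q(g)\phi'_q,\phi'_q\rangle$ on $K_q$, proved by descending to $PGL(2,\F_q)$ and applying Weil's bound for complete character sums: it has absolute value $1$ on $WK_q(q,q)$, where $W=\{1,w\}$, and is $O(q^{-1/2})$ everywhere else. This dictates a decomposition \emph{in the group variable}, $T=T_1+T_2$, with $k_{1,f}$ supported on $WK(q,q)$: the piece $T_1$ has kernel of size $O(1)$ but, being proportional to an idempotent supported on a subgroup of index $q(q+1)/2$, has $L^2\to L^2$ norm $O(q^{-2})$; the piece $T_2$ has $L^2\to L^2$ norm $O(q^{-1})$ but kernel of size $O(q^{-1/2})$ by the Weil bound. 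Interpolating each piece separately and summing gives $\|T\|_{L^{p'}\to L^p}\ll q^{2\delta(p)-1}$, after which the adjoint-square argument and the eigenvalue lower bound $\gg q^{-1}$ finish the proof. Note also that your proposed split into a main $\gamma=e$ term and ``negligible'' arithmetic terms is not the operative decomposition: since the test function has fixed compact support, only $O(1)$ elements $\gamma\in G(\Q)$ contribute and each contributes a term of the same size, so nothing is gained by separating them; all the savings come from the matrix-coefficient dichotomy on $K_q$.
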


It should be possible to give some extension of Theorem \ref{Sogge} to general $\phi$, although in some cases the method may not give any improvement over the bound given by interpolating between $L^2$ and $L^\infty$ norms.  In particular, this seems to occur when $\phi$ is special at $q$.  We have chosen to work in the simplest case where the method gives a non-trivial result.

We guess that Theorems \ref{level} and \ref{Sogge} are the correct local bounds for $L^p$ norms in the level aspect, in the same way that (\ref{eval}) is the local bound in the eigenvalue aspect.  The term `local bound' means the best bound that may be proved by only considering the behaviour of $\phi$ in one small open set at a time, without taking the global structure of the space into account.  Equivalently, this is the sharp bound for wave packets localised at scale 1.

We make this guess for two reasons.  The first is that the analogue of Theorem \ref{Sogge} on the `compact form' $PGL(2,\Z_q)$ of the arithmetic quotient being considered is sharp when $p \ge 6$, provided one takes a vector of the same type as $\phi'$ defined below.  This may be seen from equation (\ref{innerprod}) and Lemma \ref{matrix}.  The same should be true for Theorem \ref{level} when $N$ is a growing power of a fixed prime.  Secondly, we expect the bound of Theorem \ref{level} to have a natural expression as the square root of the Plancherel density around the representation of $\phi$. Because the proofs do not make use of the global structure of the arithmetic quotient, it should be possible to improve the exponents by using arithmetic amplification.

{\bf Acknowledgements:}  We would like to thank Valentin Blomer, Abhishek Saha, and Nicolas Templier for helpful discussions.

\section{Notation}

\subsection{Adelic groups}

Let $D/\Q$ be a quaternion division algebra that is split at infinity.  Let $S$ be the set containing 2 and all primes that ramify in $D$, and let $S_\infty = S \cup \{ \infty \}$.  Let $G = PGL_1(D)$.  If $v$ is a place of $\Q$, let $G_v = G(\Q_v)$.  Let $X = G(\Q) \backslash G(\A)$.  Let $\cO \subset D$ be a maximal order.  Let $K = \otimes_p K_p \subset G(\A_f)$ be a compact subgroup with the property that $K_p$ is open in $G_p$ for $p \in S$, and $K_p$ is isomorphic to the image of $\cO_p^\times$ in $G_p$ when $p \notin S$.  This allows us to choose isomorphisms $K_p \simeq PGL(2,\Z_p)$ when $p \notin S$.  When $M, N \ge 1$ are prime to $S$, we shall use these isomorphisms to define the upper triangular congruence subgroup $K_0(N)$, principal congruence subgroup $K(N)$, and

\bes
K(M,N) = \left\{ k \in K : k \equiv \left( \begin{array}{cc} * & * \\ 0 & * \end{array} \right) (M), k \equiv \left( \begin{array}{cc} * & 0 \\ * & * \end{array} \right) (N) \right\}
\ees
in the natural way.  We choose a maximal compact subgroup $K_\infty \subset G_\infty$.\\

We fix a Haar measure on $G(\A)$ by taking the product of the measures on $G_p$ assigning mass 1 to $K_p$, and any Haar measure on $G_\infty$.  We use this measure to define convolution of functions on $G(\A)$, which we denote by $*$, and if $f \in C^\infty_0(G(\A))$ we use it to define the operator $R(f)$ by which $f$ acts on $L^2(X)$.  If $H$ is a group and $f$ is a function on $H$, we define the function $f^{\vee}$ by $f^{\vee}(h) = \overline{f}(h^{-1})$.  If $f \in C^\infty_0(G(\A))$, the operators $R(f)$ and $R(f^{\vee})$ are adjoints.

\subsection{Newforms}

Let $N \ge 1$ be prime to $S$.  We shall say that $\phi \in L^2(X)$ is a newform of level $K_0(N)$ if $\phi$ lies in an automorphic representation $\pi = \otimes_v \pi_v$ of $G$, $\phi$ is invariant under $K_0(N)$, and we have a factorisation $\phi = \otimes_v \phi_v$ where $\phi_v$ is a local newvector of level $N_v$ for $v \notin S_\infty$.  We shall say that $\phi$ is spherical with spectral parameter $t \in \C$ if $\pi_\infty$ satisfies these conditions, and $\phi$ is invariant under $K_\infty$.  Note that our normalisation of $t$ is such that the tempered principal series corresponds to $t \in \R$.

\subsection{The Harish-Chandra transform}

If $k \in C^\infty_0(G_\infty)$, we define its Harish-Chandra transform

\bes
\widehat{k}(t) = \int_{G_\infty} k(g) \varphi_t(g) dg
\ees
for $t \in \C$, where $\varphi_t$ is the standard spherical function with spectral parameter $t$.  We will use the following standard result on the existence of a $K_\infty$-biinvariant function with concentrated spectral support.

\begin{lemma}
\label{HC}

There is a compact set $B \subset G_\infty$ such that for any $t \in \R \cup [0,i/2]$, there is a $K_\infty$-biinvariant function $k \in C^\infty_0(G_\infty)$ with the following properties:

\begin{enumerate}[(a)]

\item \label{a}
The function $k$ is supported in $B$, and $\| k \|_\infty \ll 1 + |t|$ where the implied constant is uniform in $t$.

\item \label{b}
The Harish-Chandra transform $\widehat{k}$ is non-negative on $\R \cup [0,i/2]$, and satisfies $\widehat{k}(t) \ge 1$.

\end{enumerate}

\end{lemma}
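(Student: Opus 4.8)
The plan is to use the spherical (Harish--Chandra) Paley--Wiener theorem together with the explicit Plancherel density for $G_\infty$. Since $D$ is split at infinity we have $G_\infty \cong PGL_2(\R)$, so its symmetric space is the hyperbolic plane $\bH^2$, the transform $\widehat{k}(t) = \int_{G_\infty} k\,\varphi_t\, dg$ is the usual spherical transform, and two classical facts are available: (i) a $K_\infty$-biinvariant $C_0^\infty$ function is supported in the geodesic ball of radius $R$ if and only if its spherical transform extends to an even entire function of exponential type $R$ with rapid decay along horizontal lines; and (ii) the Plancherel formula $\|f\|_2^2 = c\int_\R |\widehat f(r)|^2\,\beta(r)\,dr$ for $K_\infty$-biinvariant $f$, where the density $\beta(r)$ is a constant multiple of $r\tanh(\pi r)$ and in particular satisfies $\beta(r) \ll 1 + |r|$. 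The single point I would isolate first is that non-negativity of $\widehat{k}$ must be arranged on the complementary strip $[0,i/2]$ as well as on $\R$; this is awkward to impose directly, so instead I would set $k = f * f^\vee$ for a suitable $K_\infty$-biinvariant $f$, so that by multiplicativity of the spherical transform and the identity $\widehat{f^\vee}(\lambda)=\overline{\widehat f(\bar\lambda)}$ one gets $\widehat{k}(\lambda) = \widehat f(\lambda)\,\overline{\widehat f(\bar\lambda)}$. Using that $\widehat f$ is even and that $\overline{\varphi_\lambda} = \varphi_{\bar\lambda}$, this equals $|\widehat f(\lambda)|^2$ for every $\lambda \in \R \cup i\R$, which is automatically $\ge 0$ on $\R \cup [0,i/2]$. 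This reduces the problem to constructing $f$.

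For the construction, fix a small $\epsilon > 0$ and let $g(z) = (\sin(\epsilon z)/(\epsilon z))^2$, an even entire function of exponential type $2\epsilon$, non-negative on $\R$, with $g(0) = 1$ and $g(r) \ll (1 + |r|)^{-2}$. Given $t$, I would take $\widehat f = h_t$ where $h_t(r) = g(r-t) + g(r+t)$; this is again even, entire of exponential type $2\epsilon$, and satisfies the Paley--Wiener decay, so by (i) the corresponding $f = f_t$ is supported in a geodesic ball of a fixed radius, whence $k = f_t * f_t^\vee$ is supported in a fixed compact set $B$, independent of $t$. At the point $t$ itself one computes $h_t(t) = g(0) + g(2t) \ge 1$, since $g(2t) \ge 0$ both for $t \in \R$ and for $t = is$ with $s \in [0,1/2]$ (there $g(2t) = (\sinh(2\epsilon s)/(2\epsilon s))^2 \ge 0$); thus $\widehat k(t) = |h_t(t)|^2 \ge 1$, giving property (\ref{b}).

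It remains to bound $\|k\|_\infty$. By Cauchy--Schwarz on $G_\infty$ one has $\|f_t * f_t^\vee\|_\infty \le \|f_t\|_2^2$, and by the Plancherel formula (ii), $\|f_t\|_2^2 \ll \int_\R |h_t(r)|^2 (1 + |r|)\, dr$. The function $h_t$ is concentrated in windows of width $O(1)$ about $\pm t$, and substituting $r = u \pm t$ bounds this by $\int |g(u)|^2 (1 + |u| + |t|)\, du \ll 1 + |t|$, because $\int |g(u)|^2\, du$ and $\int |g(u)|^2 |u|\, du$ are finite by the decay of $g$. Hence $\|k\|_\infty \ll 1 + |t|$, which is property (\ref{a}). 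The main obstacle, and the place where the argument must be handled with care, is this simultaneous balancing of the three requirements: fixing the exponential type forces a fixed support $B$, but then positivity at $t$ and the sup-norm bound compete, and they are reconciled precisely because a window of width $O(1)$ weighted against the Plancherel density of size $\asymp |t|$ produces exactly the admissible loss $1 + |t|$; the complementary-strip positivity is what dictates the $f * f^\vee$ form rather than a direct choice of $k$.
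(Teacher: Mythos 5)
Your construction is essentially correct in structure, but it takes a genuinely different route from the paper, and it has one technical gap worth fixing. The paper's proof is nearly trivial in length: for $t \in \R$, $|t| \ge 1$ it simply cites Lemma 2.1 of Templier \cite{T2}, and for $|t| \le 1$ it takes $k = k_0 * k_0$ for a fixed real $K_\infty$-biinvariant bump $k_0$ (note this is the same positivity mechanism as your $f * f^\vee$ device, since a real $K_\infty$-biinvariant function on $G_\infty$ satisfies $k_0 = k_0^\vee$). What you have written out is, in effect, a self-contained proof of the cited input: the reduction of positivity on $\R \cup [0,i/2]$ to the identity $\widehat{f * f^\vee}(\lambda) = \widehat f(\lambda)\overline{\widehat f(\bar\lambda)}$ together with evenness, the translated window $h_t(r) = g(r-t)+g(r+t)$, and the Plancherel computation $\|k\|_\infty \le \|f\|_2^2 \ll \int_\R |h_t(r)|^2 (1+|r|)\,dr \ll 1+|t|$ are all sound, and the last step correctly identifies why the loss is exactly $1+|t|$. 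This buys a proof that is independent of \cite{T2} and makes the source of the bound transparent, at the cost of needing the surjectivity direction of the spherical Paley--Wiener theorem.

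The gap: the lemma requires $k \in C^\infty_0(G_\infty)$, but your choice $g(z) = (\sin(\epsilon z)/(\epsilon z))^2$ decays only like $|z|^{-2}$ on horizontal lines. The Gangolli--Helgason Paley--Wiener theorem for $C^\infty_0$ biinvariant functions requires $\widehat f$ to decay faster than any polynomial in horizontal strips; with only quadratic decay, the inverse spherical transform of $h_t$ converges to a continuous compactly supported function, but you cannot conclude smoothness, so $f = f_t$ (hence $k$) need not lie in $C^\infty_0$. The fix is cheap and leaves every other step of your argument unchanged: take $g = \widehat{\psi}^{\,2}$ where $\psi \in C^\infty_0(\R)$ is even, real-valued, non-negative, and $\int \psi = 1$. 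Then $g$ is even, entire of exponential type, rapidly decreasing on horizontal lines, satisfies $g(0)=1$, $g \ge 0$ on $\R$, and $g(is) = \bigl(\int \psi(x) e^{sx}\,dx\bigr)^2 \ge 0$ on the imaginary axis, which is all that your positivity and normalisation steps used. Raising the sinc factor to a higher finite power would not suffice, since that still gives only polynomial decay.
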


\begin{proof}

When $t \in \R$ and $|t| \ge 1$, this is e.g. Lemma 2.1 of \cite{T2}.  When $|t| \le 1$, one may fix a $K_\infty$-biinvariant real bump function $k_0$ supported near the identity and define $k = k_0 * k_0$.

\end{proof}

Note that condition (\ref{b}) implies that $k = k^{\vee}$.

\subsection{Inner products of matrix coefficients}

Let $H$ be a finite group, and $(\rho, V)$ an irreducible representation of $H$.  We normalise the Haar measure on $H$ to have mass 1.  If $v_i \in V$ for $1 \le i \le 4$, we have

\be
\label{innerprod}
\int_H \langle \rho(h)v_1, v_2 \rangle \overline{ \langle \rho(h) v_3, v_4 \rangle } dh = \frac{ \langle v_1, v_3 \rangle \overline{ \langle v_2, v_4 \rangle } }{ \dim V}.
\ee

\section{Proof of Theorem \ref{level}}
\label{sec3}

\begin{proof}

Choose $g \in G(\A)$ by setting $g_v = 1$ for $v \in S_\infty$, and

\bes
g_v = \left( \begin{array}{cc} N_v & \\ & N_{0,v} \end{array} \right)
\ees
when $v \notin S_\infty$.  We define $\phi' = \pi(g) \phi$, so that $\phi'$ is invariant under $K(N_0, N/N_0)$.  Let $V = \otimes V_p \subset \pi$ be the space generated by $\phi'$ under the action of $K$.

We define $k_f \in C^\infty_0( G(\A_f))$ to be $\overline{\langle \pi(g) \phi', \phi' \rangle}$ for $g \in K$ and 0 otherwise.  Choose a function $k_\infty \in C^\infty_0(G_\infty)$ as in Lemma \ref{HC}, and define $k = k_\infty k_f$.  It may be seen that $k = k^{\vee}$, which implies that $R(k)$ is self-adjoint.  Lemma \ref{dimension} and (\ref{innerprod}) imply that $k_f = \dim V k_f * k_f$, and combining this with Lemma \ref{HC} (\ref{b}) gives that $R(k)$ is non-negative.  Lemmas \ref{HC} and \ref{dimension} and equation (\ref{innerprod}) imply that $\pi(k)\phi' = \lambda \phi'$, where $\lambda > 0$ and

\be
\label{lambda}
\lambda^{-1} \le N_0 \prod_{p | N} (1 + 1/p).
\ee
Extend $\phi'$ to an orthonormal basis $\{ \phi_i \}$ of eigenfunctions for $R(k)$ with eigenvalues $\lambda_i \ge 0$.  The pre-trace formula associated to $k$ is

\bes
\sum_i \lambda_i |\phi_i(x)|^2 = \sum_{\gamma \in G(\Q)} k(x^{-1} \gamma x)
\ees
and dropping all terms from the LHS but $\phi'$ gives

\bes
\lambda |\phi'(x)|^2 \le \sum_{\gamma \in G(\Q)} k(x^{-1} \gamma x).
\ees
The compactness of $X$ and uniformly bounded support of $k$ implies that the number of nonzero terms on the RHS is bounded independently of $x$, and combining (\ref{lambda}) and Lemma \ref{HC} (\ref{b}) completes the proof.

\end{proof}

\begin{lemma}
\label{dimension}

$V$ is an irreducible representation of $K$, and $\dim V \le N_0 \prod_{p | N} (1 + 1/p)$

\end{lemma}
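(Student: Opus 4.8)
The plan is to factor the statement over the finite primes, reduce each local factor to a representation of a finite quotient of $K_p$, and identify that representation with the minimal $K$-type of $\pi_p$. Since $\phi' = \pi(g)\phi = \otimes_p \phi'_p$ with $\phi'_p = \pi_p(g_p)\phi_p$, and $K = \prod_p K_p$ acts factorwise, the module $V$ generated by $\phi'$ is $\otimes_p V_p$, where $V_p$ is the $K_p$-module generated by $\phi'_p$. For every finite $p \nmid N$ we have $g_p = 1$, and $\phi_p$ is $K_p$-fixed (it is spherical when $p \notin S$, and is $K_p$-fixed when $p \in S$ since $\phi$ is $K_0(N)$-invariant and $p \nmid N$); thus $\phi'_p = \phi_p$ and $V_p$ is trivial. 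As a finite external tensor product of representations of the $K_p$ is irreducible exactly when each factor is, and dimensions multiply, it suffices to prove for each $p \mid N$ — so $p \notin S$, $p$ is odd (because $2 \in S$), $K_p \cong PGL_2(\Z_p)$, and $\pi_p$ has conductor $p^n$ with $n = n_p \ge 1$ — that $V_p$ is irreducible with $\dim V_p \le p^{\lceil n/2\rceil}(1 + 1/p)$.

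Put $a = \lceil n/2\rceil$ and $b = \lfloor n/2\rfloor$. A direct matrix computation gives $g_p K_0(p^n) g_p^{-1} \cap K_p = K(p^a, p^b)$, so $\phi'_p$ is fixed by $K(p^a, p^b)$, and in particular by the principal congruence subgroup $K(p^a)$. Since $K(p^a) \trianglelefteq K_p$, every $K_p$-translate of $\phi'_p$ is again $K(p^a)$-fixed, so $V_p$ is inflated from a nonzero subrepresentation of $\pi_p^{K(p^a)}$, viewed as a representation of the finite group $\overline K = PGL_2(\Z/p^a)$.

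The crux is the claim that $\pi_p^{K(p^a)}$ is \emph{irreducible} as a representation of $\overline K$, with $\dim \pi_p^{K(p^a)} \le p^{a-1}(p+1) = p^a(1 + 1/p)$; equivalently, that it is the minimal $K$-type of $\pi_p$, occurring for the first time at level $a = \lceil n/2\rceil$. Granting this, $V_p$ is a nonzero subrepresentation of an irreducible representation and therefore equals $\pi_p^{K(p^a)}$, which is irreducible of dimension $\le p^a(1 + 1/p)$, as required. For a ramified principal series $\chi \times \chi^{-1}$ (so $n = 2a$ and $\chi$ has conductor $p^a$) I would verify the claim directly in the model of $\pi_p$ on functions on $\mathbb P^1$: there $\pi_p^{K(p^a)} = \Ind_{\overline B}^{\overline K}(\chi, \chi^{-1})$, with $\overline B$ the image of the Borel, of dimension $|\mathbb P^1(\Z/p^a)| = p^{a-1}(p+1)$, and it is irreducible because $\chi$ is a regular character (here one uses that $p$ is odd, so $\chi^2$ remains of conductor $p^a$).

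The main obstacle is to establish this irreducibility-and-dimension claim uniformly over the remaining inertial types, in particular twists of Steinberg and supercuspidals, including those of odd conductor. The cleanest uniform route appears to be Clifford theory over $\Z/p^a$, by induction on $a$ with base case the representation theory of $PGL_2(\F_p)$: one restricts $\pi_p^{K(p^a)}$ to the abelian normal subgroup $K(p^{a-1})/K(p^a) \cong \mathfrak{pgl}_2(\F_p)$ and shows that the characters occurring form a single $\overline K$-orbit under the adjoint action, whose type — split, nonsplit, or ramified — and whose stabilizer are pinned down by the conductor (equivalently the depth) of $\pi_p$. The heart of the matter is to prove that this orbit is regular semisimple with stabilizer of the expected index, which is what simultaneously forces the irreducibility and the dimension count; equivalently, that the datum defining $\pi_p$ at depth $a - 1$ is maximally nondegenerate.
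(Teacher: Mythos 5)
Your reduction to the local factors $V_p$, the computation that $\phi'_p$ is fixed by $K_p(p^a,p^b)$, and the observation that $V_p$ factors through $PGL(2,\Z/p^a\Z)$ all agree with the paper. The gap is your central claim that the full fixed space $\pi_p^{K_p(p^a)}$ is an irreducible representation of $K_p$ (the ``minimal $K$-type''): this is false, and it fails exactly in the case you propose to verify directly. Take $p$ odd and $\chi$ the ramified quadratic character of $\Q_p^\times$ (conductor $p$), so that $\pi_p = \cI(\chi,\chi^{-1})$ is an irreducible principal series of conductor $p^2$, i.e.\ $n=2$, $a=1$. Then $\pi_p^{K_p(p)} \cong \Ind_{\overline{B}}^{PGL(2,\F_p)}(\overline{\chi})$ with $\overline{\chi} = \overline{\chi}^{-1}$, and since the character $\mathrm{diag}(t_1,t_2) \mapsto \chi(t_1/t_2) = \chi(t_1 t_2)$ is just $\chi \circ \det$ restricted to $\overline{B}$, this induced representation equals $(\chi\circ\det) \oplus \big( (\chi\circ\det)\otimes \mathrm{St} \big)$, reducible with constituents of dimensions $1$ and $p$. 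Your parenthetical justification (``$p$ odd, so $\chi^2$ remains of conductor $p^a$'') is valid only for $a \ge 2$, where squaring is injective on the $p$-group $(1+p\Z_p)/(1+p^a\Z_p)$; for $a=1$ the relevant group is $\F_p^\times$, of even order, so quadratic characters exist. The paper itself records this phenomenon in $\mathsection$\ref{sec4}, where it allows $\dim V = q$ as well as $q+1$: for quadratic $\chi$, $V_q$ is the $q$-dimensional constituent, a \emph{proper} subrepresentation of the $(q+1)$-dimensional fixed space. Consequently your inference ``$V_p$ is a nonzero subrepresentation of an irreducible representation, hence equals it'' collapses, taking both the irreducibility and the dimension count with it; the Clifford-theory program you sketch for the remaining inertial types is aimed at proving a statement that is false in general, in addition to being left unproved.

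The paper derives irreducibility of $V_p$ from a different and more robust source: uniqueness of the newvector. Since $g_p K_0(p^n) g_p^{-1} = K_p(p^a,p^b)$, the space of $K_p(p^a,p^b)$-fixed vectors in $\pi_p$ is exactly $\pi_p(g_p)$ applied to the newvector line, hence one-dimensional. If $V_p = V^1 \oplus V^2$ with both summands nontrivial $K_p$-invariant subspaces, then the projections of $\phi'_p$ onto $V^1$ and $V^2$ commute with the $K_p$-action, so both projections are $K_p(p^a,p^b)$-fixed, and both are nonzero because $\phi'_p$ generates $V_p$; this produces two linearly independent $K_p(p^a,p^b)$-fixed vectors, a contradiction. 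The dimension bound then requires no identification of $V_p$ at all: one only needs that \emph{every} irreducible representation of $PGL(2,\Z/p^a\Z)$ has dimension at most $p^{a-1}(p+1) = p^a(1+1/p)$, which the paper quotes from Silberger's classification. If you wish to salvage your approach, the orbit analysis would have to be aimed at the cyclic module $V_p$ itself rather than at $\pi_p^{K_p(p^a)}$ --- at which point the newvector-uniqueness argument above is both shorter and uniform across all inertial types.
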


It suffices to prove the analogous statement for the tensor factors $V_p$, and we may assume that $p \notin S$.  If we could write $V_p = V^1 + V^2$, where $V^i$ are nontrivial $K_p$-invariant subspaces, then the projection of $\phi_p'$ to each subspace would be invariant under $K_p(N_0, N/N_0)$.  However, this contradicts the uniqueness of the newvector.

As $V_p$ is irreducible and factors through $K_p / K_p(N_0)$, the Lemma now follows from the results of Silberger \cite[$\mathsection$3.4]{S}, in particular the remarks on page 96-7.

\subsection{A result in the non-compact case}
\label{noncomp}

If we set $G = PGL_2/\Q$, it may be seen that we have the following analogue of Theorem \ref{level}.

\begin{prop}

Let $\Omega \subset G(\Q) \backslash G(\A)$ be compact.  Let $\phi$ be an $L^2$-normalised newform of level $K_0(N)$ on $G$.  Assume that $\phi$ is spherical at infinity with spectral parameter $t$.  Let $N_0 \ge 1$ be the smallest number with $N | N_0^2$.  If $\phi'$ is related to $\phi$ as above, we have $\| \phi'|_\Omega \|_\infty \ll (1 + |t|)^{1/2} N_0^{1/2} \prod_{p | N} (1 + 1/p)^{1/2}$.

\end{prop}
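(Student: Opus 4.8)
The plan is to run the argument of Theorem~\ref{level} essentially unchanged, replacing the pre-trace formula for a compact quotient by the spectral expansion of the automorphic kernel on the non-compact quotient $X = G(\Q)\backslash G(\A)$, and replacing the global compactness of $X$ by the compactness of $\Omega$. First I would construct exactly the same data: shift $\phi$ by the element $g$ of Theorem~\ref{level} to form $\phi' = \pi(g)\phi$, invariant under $K(N_0, N/N_0)$; let $V \subset \pi$ be the $K$-representation it generates; set $k_f$ equal to $\overline{\langle\pi(\cdot)\phi',\phi'\rangle}$ on $K$ and $0$ elsewhere; choose $k_\infty$ as in Lemma~\ref{HC}; and put $k = k_\infty k_f$. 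Every ingredient used in the compact proof — that $k = k^\vee$, so $R(k)$ is self-adjoint; that $R(k_f) = \dim V\, R(k_f)R(k_f)^\ast \ge 0$ and $R(k_\infty) \ge 0$ since $\widehat{k_\infty} \ge 0$ on $\R\cup[0,i/2]$, whence $R(k) \ge 0$; and that $\pi(k)\phi' = \lambda\phi'$ with $\lambda^{-1} \le N_0\prod_{p\mid N}(1+1/p)$ as in (\ref{lambda}) — is a statement about $\pi$, $K$, and $G_\infty$ alone, with no reference to compactness of $X$, and so carries over verbatim.

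The one genuine change is on the spectral side. I would write the automorphic kernel $K_k(x,y) = \sum_{\gamma\in G(\Q)} k(x^{-1}\gamma y)$ representing $R(k)$. On the non-compact quotient its spectral expansion contains, besides the discrete part $\sum_i \lambda_i\phi_i(x)\overline{\phi_i(y)}$, a continuous contribution from Eisenstein series. The point I would stress is that on the diagonal $x=y$ every term is non-negative: the discrete terms are $\lambda_i|\phi_i(x)|^2 \ge 0$, while the Eisenstein terms are $\widehat{k_\infty}(t)$, non-negative on $\R$, times the finite eigenvalue coming from $R(k_f) \ge 0$, times a factor $|E(x,\cdot)|^2 \ge 0$. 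Equivalently, $R(k) - \lambda P$ (with $P$ the orthogonal projection onto $\phi'$) is the restriction of the non-negative operator $R(k)$ to the orthogonal complement of $\phi'$, hence non-negative, with non-negative diagonal kernel. Either way one may discard all spectral terms but $\phi'$ to obtain, for every $x$,
\bes
\lambda |\phi'(x)|^2 \le \sum_{\gamma\in G(\Q)} k(x^{-1}\gamma x).
\ees

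It then remains to bound the geometric side for $x\in\Omega$. Since $k$ is supported in the fixed set $B\times K$, which is compact in $G(\A)$ ($B$ compact by Lemma~\ref{HC}, and $K$ compact), and since $G(\Q)$ is discrete in $G(\A)$, the count $\#\{\gamma\in G(\Q): x^{-1}\gamma x\in \operatorname{supp}(k)\}$ is bounded uniformly as $x$ ranges over the compact set $\Omega$. Each surviving term is at most $\|k_\infty\|_\infty\|k_f\|_\infty \ll 1+|t|$, using Lemma~\ref{HC}(\ref{a}) and $\|k_f\|_\infty = \langle\phi',\phi'\rangle = 1$, so the right-hand side is $\ll 1+|t|$ on $\Omega$. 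Combining with (\ref{lambda}) gives $|\phi'(x)|^2 \ll \lambda^{-1}(1+|t|) \le (1+|t|)N_0\prod_{p\mid N}(1+1/p)$ for $x\in\Omega$, which is the assertion.

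The main obstacle, and the reason the conclusion is necessarily local, will be exactly the step where Theorem~\ref{level} used global compactness of $X$: the lattice-point count $\#\{\gamma : x^{-1}\gamma x\in \operatorname{supp}(k)\}$ degenerates as $x$ approaches the cusp, so $\sum_\gamma k(x^{-1}\gamma x)$ cannot be bounded uniformly on all of $X$, only on a fixed compact $\Omega$. A secondary technical point I would have to handle carefully is justifying that the continuous-spectrum contribution to $K_k(x,x)$ is non-negative and may be discarded termwise; this is where one invokes positivity of $R(k)$ on the whole of $L^2(X)$, including the continuous spectrum, rather than any trace-class property, which fails here.
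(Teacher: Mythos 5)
Your proposal is correct and is precisely the argument the paper intends: the paper offers no written proof (it simply says the analogue of Theorem \ref{level} ``may be seen''), and the intended proof is exactly this adaptation --- rerun the construction of $\phi'$, $k_f$, $k_\infty$ and the positivity argument, use non-negativity of $R(k)$ (equivalently of $R(k)-\lambda P$) to discard the continuous and residual spectrum on the diagonal, and replace global compactness of $X$ by uniformity of the lattice count over the compact set $\Omega$. Your handling of the Eisenstein contribution and of the degeneration of the geometric side near the cusp supplies correctly the details the paper leaves unstated.
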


It may be possible to strengthen this to a bound on all of $G(\Q) \backslash G(\A)$ using an analysis of Whittaker functions.

\section{Proof of Theorem \ref{Sogge}}
\label{sec4}

We maintain the notation $k_f$, $\phi'$ and $V$ from $\mathsection$\ref{sec3}.  We note that $\dim V = q$ or $q+1$; the possibility of $\dim V = 1$ is ruled out because any one-dimensional representation of $K_q$ that is trivial on $K_q(q,q)$ must be trivial.  As in Lemma \ref{HC}, choose a $K_\infty$-biinvariant function $k_\infty^0 \in C^\infty_0(G_\infty)$ with the following properties:

\begin{enumerate}[(a)]

\item The function $k_\infty^0$ is real-valued.

\item Its Harish-Chandra transform satisfies $\widehat{k}_\infty^0(t) \ge 1$ for $t \in \R \cup [0,i/2]$ with $|t| \le A$.

\end{enumerate}

It follows that $k_\infty^0 =  (k_\infty^0)^{\vee}$.  We define $k_\infty = k_\infty^0 * k_\infty^0$.  Let $k_0 = k_\infty^0 k_f$, and $k = k_\infty k_f$.  Let $T_0 = R(k_0)$ and $T = R(k)$.  We see that $T_0$ is self-adjoint, and equation (\ref{innerprod}) implies that $T = \dim V T_0^2$.  Let $W \subset K_q$ be the subgroup $\{ 1, w \}$, where

\bes
w = \left( \begin{array}{cc} 0 & 1 \\ -1 & 0 \end{array} \right).
\ees
Let $k_{1,f} \in C^\infty_0(G(\A_f))$ be $k_f$ times the characteristic function of $W K(q,q)$, and let $k_{2,f} = k_f - k_{1,f}$.  Let $k_i = k_\infty k_{i,f}$, and $T_i = R(k_i)$.  The proof of Theorem \ref{Sogge} works by combining the decomposition $T = T_1 + T_2$ with interpolation between the following bounds.

\begin{lemma}

We have

\begin{align*}
\| T_1 f \|_\infty & \ll \| f \|_{1} \\
\| T_2 f \|_\infty & \ll q^{-1/2} \| f \|_{1}
\end{align*}
for any $f \in C^\infty(X)$.

\end{lemma}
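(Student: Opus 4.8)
The plan is to prove both bounds via the pre-trace / kernel formalism, estimating the Schwartz kernel of each operator $T_i$ on the diagonal and off-diagonal. The operator $T_i = R(k_i)$ acts by convolution against the kernel

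\[
K_i(x,y) = \sum_{\gamma \in G(\Q)} k_i(x^{-1} \gamma y),
\]

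so that $(T_i f)(x) = \int_X K_i(x,y) f(y)\, dy$. The $\|\cdot\|_1 \to \|\cdot\|_\infty$ operator norm is exactly $\sup_{x,y} |K_i(x,y)|$, so both claimed inequalities reduce to the pointwise bounds $\sup_{x,y}|K_1(x,y)| \ll 1$ and $\sup_{x,y}|K_2(x,y)| \ll q^{-1/2}$. Since $k_\infty = k_\infty^0 * k_\infty^0$ is supported in a fixed compact set $B^2 \subset G_\infty$ (independent of $q$) and satisfies $\|k_\infty\|_\infty \ll 1$ by Lemma \ref{HC}, and since the finite-adelic factors $k_{i,f}$ are supported in a fixed compact set and the quotient $X$ is compact, the number of lattice points $\gamma$ contributing to each sum is bounded uniformly in $q$; thus everything comes down to bounding the single summand $\sup_g |k_i(g)| = \|k_\infty\|_\infty \cdot \|k_{i,f}\|_\infty \ll \|k_{i,f}\|_\infty$.

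First I would bound $\|k_{1,f}\|_\infty$. By definition $k_{1,f}$ is $k_f$ restricted to $W K(q,q)$, and $k_f(g) = \overline{\langle \pi(g)\phi', \phi'\rangle}$ for $g \in K$. Since $\phi'$ is $L^2$-normalised and $\pi(g)$ is unitary, Cauchy--Schwarz gives $|k_f(g)| \le \|\phi'\|_2^2 = 1$ for every $g$; hence $\|k_{1,f}\|_\infty \le 1$, which yields the first inequality. Second, for $k_{2,f} = k_f \cdot (\mathbf{1}_{K_q} - \mathbf{1}_{W K(q,q)})$ supported away from the small coset $W K(q,q)$, I would show that the matrix coefficient $\langle \pi(g)\phi', \phi'\rangle$ is small there, of size $O(q^{-1/2})$. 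The key input is that $\phi_q'$ is a newvector in a principal-series representation $\pi_q$ at the prime $q$ with conductor $q^2$, realised (after the twist by $g$) as a vector in the finite-dimensional representation $V_q$ of $K_q$ of dimension $q$ or $q+1$ that factors through $K_q/K_q(q)$, i.e.\ through $PGL_2(\F_q)$. The matrix coefficients of an irreducible representation of dimension $d \asymp q$ of the finite group $PGL_2(\F_q)$ have pointwise size $\ll q^{-1/2}$ away from the stabiliser of the relevant vector, by orthogonality: applying (\ref{innerprod}) with $v_1=v_3=v_2=v_4 = \phi_q'$ and integrating $|\langle \rho(h)\phi_q', \phi_q'\rangle|^2$ over $h$ gives average size $(\dim V_q)^{-1} \asymp q^{-1}$, so a pointwise bound of $q^{-1/2}$ is the natural square-root-cancellation statement, and it holds off the diagonal block where the coefficient is large.

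The main obstacle, and the part requiring genuine representation-theoretic input rather than soft estimates, is the pointwise bound $|\langle \pi(g)\phi', \phi'\rangle| \ll q^{-1/2}$ on the complement $K_q \setminus W K(q,q)$, i.e.\ controlling the newform matrix coefficient uniformly off a single double coset. The set $W K(q,q)$ is singled out precisely because $w$ normalises the torus and the newvector concentrates on the two cosets $K(q,q)$ and $w K(q,q)$ (reflecting the two-dimensional space fixed by the mirabolic structure), so these must be excluded before square-root cancellation kicks in. I would establish this by explicitly decomposing $\phi_q'$ in a basis adapted to the action of the lower/upper unipotents and the torus of $PGL_2(\F_q)$ and computing, or invoking the known structure of the newvector in a principal series of conductor $q^2$; the bound $\ll q^{-1/2}$ then follows from the decay of the corresponding Kloosterman-type or Gauss-sum expressions, equivalently from the $L^\infty$ bound for matrix coefficients of a $\asymp q$-dimensional irreducible representation of $PGL_2(\F_q)$ normalised at a cyclic vector. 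Once this coefficient bound is in hand, summing the uniformly bounded number of lattice-point contributions completes both estimates.
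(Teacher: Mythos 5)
Your proposal follows essentially the same route as the paper: the paper likewise identifies the $L^1 \to L^\infty$ operator norm with the supremum of the automorphic kernel $\sum_{\gamma \in G(\Q)} k_i(x^{-1}\gamma y)$, uses compactness of $X$ and the fixed support of the $k_i$ to bound the number of nonzero terms uniformly, and reduces everything to $\| k_1 \|_\infty \ll 1$ and $\| k_2 \|_\infty \ll q^{-1/2}$, the latter being exactly the paper's Lemma \ref{matrix} on matrix coefficients of the newvector off $W K_q(q,q)$, proved by an explicit realisation of the torus-fixed vector in the principal series of $PGL(2,\F_q)$ and Weil's bound for character sums --- precisely the ``Gauss/Kloosterman-sum decay'' computation you defer to. One caution: your orthogonality heuristic (average size $(\dim V_q)^{-1} \asymp q^{-1}$, hence pointwise $q^{-1/2}$ off the bad cosets) is not by itself a proof, since an $L^2$ average never yields a pointwise bound; the matrix-coefficient estimate genuinely requires the explicit character-sum argument, so your proof is complete only modulo carrying that computation out as the paper does.
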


\begin{proof}

The integral kernels of $T_i$ are given by

\bes
\sum_{\gamma \in G(\Q)} k_i(x^{-1} \gamma y).
\ees
The result now follows from the compactness of $G(\Q) \backslash G(\A)$, the bound $\| k_1 \|_\infty \ll 1$, and the bound $\| k_2 \|_\infty \ll q^{-1/2}$ which follows from Lemma \ref{matrix} below.

\end{proof}

\begin{lemma}

We have

\begin{align*}
\| T_1 f \|_2 & \ll q^{-2} \| f \|_2 \\
\| T_2 f \|_2 & \ll q^{-1} \| f \|_2
\end{align*}
for any $f \in C^\infty(X)$.

\end{lemma}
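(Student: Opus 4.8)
The plan is to bound the $L^2$ operator norms of $T_1$ and $T_2$ by passing to the spectral decomposition of $L^2(X)$ and reducing everything to a computation on the finite group $H = K_q/K_q(q) \cong PGL_2(\F_q)$. Since $X$ is compact, I would write $L^2(X) = \bigoplus_\pi m(\pi)\pi$ as a discrete sum of automorphic representations, so that $\|T_i\|_{L^2\to L^2} = \sup_\pi \|\pi(k_i)\|$. Because $k_i = k_\infty k_{i,f}$ is a product, on each $\pi = \pi_\infty\otimes\pi_f$ we have $\pi(k_i) = \pi_\infty(k_\infty)\otimes\pi_f(k_{i,f})$, so the operator norm factors over the places. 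First I would dispose of every place other than $q$. At infinity $\pi_\infty(k_\infty)$ vanishes unless $\pi_\infty$ is spherical, in which case it is $\widehat{k_\infty}(t) = \widehat{k_\infty^0}(t)^2$ times the projection onto the spherical line; since $k_\infty^0 \in C^\infty_0(G_\infty)$ its spherical transform is bounded on the unitary spherical spectrum $\R\cup[0,i/2]$, giving $\|\pi_\infty(k_\infty)\|\ll 1$ uniformly in $t$. At a place $p\neq q$ the function $k_{i,f}$ restricts to the trivial matrix coefficient of the one-dimensional $V_p$, so $\pi_p(k_{i,f})$ is a projection of norm at most one (a fixed bounded operator for $p\in S$). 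Hence $\|T_i\|_{L^2\to L^2}\ll \sup_{\pi_q}\|\pi_q(k_{i,f})\|$.

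Next I would reduce the local question at $q$ to $H$. The vector $\phi'_q$ is fixed by $K_q(q,q)$, whose image in $H$ is the split diagonal torus $\Theta = K_q(q,q)/K_q(q)$; since $K_q(q)$ is normal, the representation $V = V_q$ generated by $\phi'_q$ factors through $H$ and $\phi'_q \in V^\Theta$. Then $k_f(h) = \overline{\langle V(h)\phi'_q, \phi'_q\rangle}$ is bi-$K_q(q)$-invariant, as are $k_{1,f}$ and $k_{2,f}$, because the support $WK(q,q)$ of $k_{1,f}$ is a union of $K_q(q)$-cosets reducing to the normalizer $N(\Theta) = \Theta\sqcup\bar w\Theta$. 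Consequently $\pi_q(k_{i,f})$ annihilates the orthogonal complement of $\pi_q^{K_q(q)}$ and acts on the $H$-module $\pi_q^{K_q(q)}$, so $\|\pi_q(k_{i,f})\| = \max_U\|U(k_{i,f})\|$ over the $H$-irreducibles $U$ occurring there. It thus suffices to bound $\|U(k_{i,f})\|$ for \emph{every} irreducible representation $U$ of $H$.

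The computation for $k_{1,f}$ is the crux. Using that $\phi'_q$ is $\Theta$-fixed, the coefficient $c(h) = \langle V(h)\phi'_q,\phi'_q\rangle$ equals $1$ on $\Theta$ and the constant $c(\bar w)$ on $\bar w\Theta$, so for any irreducible $U$,
\[
U(k_{1,f}) = \frac{1}{|H|}\sum_{h\in N(\Theta)}\overline{c(h)}\,U(h) = \frac{|\Theta|}{|H|}\bigl(I + \overline{c(\bar w)}\,U(\bar w)\bigr)P_{U^\Theta},
\]
where $P_{U^\Theta} = |\Theta|^{-1}\sum_{\theta\in\Theta}U(\theta)$ is the projection onto the $\Theta$-fixed subspace. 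Since $|c(\bar w)|\le\|\phi'_q\|^2 = 1$ and $|\Theta|/|H| = (q-1)/(q(q-1)(q+1)) \asymp q^{-2}$, this yields $\|U(k_{1,f})\|\le 2|\Theta|/|H|\ll q^{-2}$ uniformly, hence $\|T_1\|_{L^2\to L^2}\ll q^{-2}$. For $k_{2,f} = k_f - k_{1,f}$ I would invoke Schur orthogonality, equation (\ref{innerprod}): matrix coefficients of inequivalent irreducibles are orthogonal, so $U(k_f) = 0$ for $U\not\cong V$, while $V(k_f) = (\dim V)^{-1}P_{\phi'_q}$ has norm $(\dim V)^{-1}\le q^{-1}$ (here $\dim V\in\{q,q+1\}$ is used). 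Combining with the previous bound gives $\|U(k_{2,f})\|\ll q^{-2}$ for $U\not\cong V$ and $\|V(k_{2,f})\|\le (\dim V)^{-1}+O(q^{-2})\ll q^{-1}$, so $\|T_2\|_{L^2\to L^2}\ll q^{-1}$.

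The main obstacle is the uniformity built into the middle steps: one must verify that the reduction to $H$ and the estimate $\|U(k_{i,f})\|\le 2|\Theta|/|H|$ hold for all irreducibles $U$ at once, and that the archimedean and away-from-$q$ factors contribute $O(1)$ independently of $\pi$. The conceptual heart is the displayed identity, which shows that integrating the matrix coefficient only over $N(\Theta)$ — a set of size $\asymp q$ inside a group of size $\asymp q^3$ — collapses the operator to a projection onto $\Theta$-fixed vectors carrying the small factor $|\Theta|/|H|\asymp q^{-2}$. This is precisely the gain that separates the bound for $T_1$ from the generic size $q^{-1}$ of $T_2$.
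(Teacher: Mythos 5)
Your proof is correct, but it takes a genuinely different (and longer) route than the paper's. The paper never decomposes $L^2(X)$ spectrally: it observes that $k_f = \dim V\, k_f * k_f$ by Schur orthogonality (\ref{innerprod}), and that $k_{1,f} = [K:WK(q,q)]\,k_{1,f}*k_{1,f} = (q(q+1)/2)\,k_{1,f}*k_{1,f}$, since by (\ref{matrix1})--(\ref{matrix2}) the restriction of $k_{1,f}$ to $WK(q,q)$ is a unitary character of that subgroup; because the relevant operators are self-adjoint and $R(k_\infty)$ is bounded, the rescaled operators are orthogonal projections, whence $\|T\|_{L^2 \to L^2} \ll (\dim V)^{-1} \le q^{-1}$ and $\|T_1\|_{L^2 \to L^2} \ll 2/(q(q+1)) \ll q^{-2}$, with the $T_2$ bound following from the triangle inequality exactly as in your final step. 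Your argument replaces this idempotent-plus-self-adjointness device with an explicit spectral computation: after factoring over places and reducing to $PGL(2,\F_q)$, your identity $U(k_{1,f}) = \tfrac{|\Theta|}{|H|}\bigl(I + \overline{c(\bar w)}\,U(\bar w)\bigr)P_{U^\Theta}$ is the representation-by-representation form of the paper's convolution identity, and your constant $2|\Theta|/|H| = 2/(q(q+1))$ is literally the paper's $[K:WK(q,q)]^{-1}$; likewise your treatment of $k_f$ ($U(k_f)=0$ for $U \not\cong V$, and $V(k_f) = (\dim V)^{-1}P_{\phi'_q}$) is the pointwise form of $k_f = \dim V\, k_f * k_f$. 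The trade-off: the paper's proof is three lines and needs no bookkeeping over places or irreducibles, since self-adjointness does that work invisibly; yours costs more verification (all of which does go through --- $|\varphi_t| \le 1$ on the unitary spherical spectrum handles $\infty$, and the factors at $p \ne q$ are projections of norm at most $1$ under the paper's measure normalization) but yields finer structural information, namely that $T_1$ is $\asymp q^{-2}$ times a twisted projection onto $\Theta$-fixed vectors, and that $\|U(k_{2,f})\| \ll q^{-2}$ off the $V$-isotypic part, so that the $V$-isotypic component is the sole obstruction to improving the $T_2$ bound; it also never requires $T$ or $T_1$ to be self-adjoint.
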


\begin{proof}

The choice of $k_\infty$ and the identity $k_f = \dim V k_f * k_f$ imply that the $L^2 \rightarrow L^2$ norm of $T$ is $\ll (\dim V)^{-1} \le q^{-1}$.  In the same way, the identity

\bes
k_{1,f} = [ K : W K(q,q) ] k_{1,f} * k_{1,f} = (q(q+1)/2) k_{1,f} * k_{1,f}
\ees
implies that the $L^2 \rightarrow L^2$ norm of $T_1$ is $\ll q^{-2}$.  The bound for $T_2$ follows from the triangle inequality.

\end{proof}

Interpolating between these bounds and applying Minkowski's inequality $\| Tf \|_p \le \| T_1f \|_p + \| T_2f \|_p$ gives $\| Tf \|_p \ll q^{2\delta(p) - 1} \| f \|_{p'}$.  We now apply the usual adjoint-square argument: we have

\begin{align*}
\langle \dim V T_0^2 f, f \rangle & \ll q^{2\delta(p) - 1} \| f \|_{p'}^2 \\
\langle T_0 f, T_0 f \rangle & \ll q^{2\delta(p) - 2} \| f \|_{p'}^2 \\
\| T_0 f \|_2 & \ll q^{\delta(p) - 1} \| f \|_{p'}.
\end{align*}
Taking adjoints gives $\| T_0 f \|_p \ll q^{\delta(p) - 1} \| f \|_2$.  Applying this with $f = \phi'$ and estimating the eigenvalue of $T_0$ on $\phi'$ as in $\mathsection$\ref{sec3} completes the proof.

\begin{lemma}
\label{matrix}

Let $\pi_q$ be isomorphic to an irreducible principal series representation $\cI(\chi, \chi^{-1})$, for some character $\chi$ of $\Q_q$ with conductor $q$.  When $g \in K_q$, the matrix coefficient $\langle \pi_q(g) \phi'_q, \phi'_q \rangle$ satisfies

\begin{align}
\label{matrix1}
\langle \pi_q(g) \phi'_q, \phi'_q \rangle & = 1, \qquad g \in K_q(q,q) \\
\label{matrix2}
\langle \pi_q(g) \phi'_q, \phi'_q \rangle & = \chi(-1), \qquad g \in wK_q(q,q) \\
\label{matrix3}
\langle \pi_q(g) \phi'_q, \phi'_q \rangle & \ll q^{-1/2}, \qquad g \notin W K_q(q,q)
\end{align}
where the implied constant is absolute.

\end{lemma}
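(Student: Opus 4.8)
The plan is to descend to the finite group $PGL_2(\F_q)$, through which the $K_q$-representation $V_q$ factors because $\chi$ has conductor $q$ and hence $\pi_q$ is trivial on the principal congruence subgroup $K(q)$. Working with representatives in $GL_2(\F_q)$, I would realize $V_q$ inside the finite principal series $\cI(\chi,\chi^{-1})$, viewed as the space of functions $f$ on $GL_2(\F_q)$ with $f(bg)=\chi(a)\chi^{-1}(d)f(g)$ for upper-triangular $b$ with diagonal $(a,d)$, with $G$ acting by right translation and $\langle f_1,f_2\rangle = |B|^{-1}\sum_g f_1(g)\overline{f_2(g)}$ (well defined since $\chi$ is unitary).

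Next I would pin down $\phi_q'$. As $\phi_q'$ is fixed by $K_q(q,q)$, whose image is the diagonal torus $T$, it is a right-$T$-invariant vector. Analyzing the $T$-orbits on $B\backslash G\cong \mathbb P^1(\F_q)$ shows the space of such vectors is one-dimensional: the two $T$-fixed points of $\mathbb P^1$ contribute nothing because $\chi$ is nontrivial, and on the remaining orbit equivariance forces a unique function up to scale. A direct computation identifies this vector with the function $F$ that, on a matrix with bottom row $(c,d)$ and determinant $\Delta$, equals $\chi(\Delta/(cd))$ when $cd\neq 0$ and vanishes otherwise (this is visibly well defined on $PGL_2$); one finds $\|F\|^2=q-1$, so $\phi_q'=F/\sqrt{q-1}$. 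Now (\ref{matrix1}) is immediate from $K_q(q,q)$-invariance together with $\|\phi_q'\|=1$, and (\ref{matrix2}) follows since $\pi_q(g)\phi_q'=\pi_q(w)\phi_q'$ for $g\in wK_q(q,q)$, while a short sum over the support of $F$ gives $\langle \pi_q(w)F,F\rangle = (q-1)\chi(-1)$, hence $\chi(-1)$ after normalizing.

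The substance is (\ref{matrix3}). I would first note that $g\notin WK_q(q,q)$ is equivalent to the image $\bar g\notin N_G(T)=T\cup wT$, and that the matrix coefficient, being bi-$T$-invariant, depends only on the double coset $T\bar g T$. Writing $\langle\pi_q(\bar g)F,F\rangle = |B|^{-1}\sum_{h}F(h\bar g)\overline{F(h)}$ and using that the summand depends only on the projective bottom row of $h$, the sum collapses to the multiplicative character sum
\[
\Phi(\bar g)=\sum_{t}\chi\!\left(\frac{t\det g}{(t\alpha+\gamma)(t\beta+\delta)}\right),\qquad \bar g=\left(\begin{smallmatrix}\alpha&\beta\\ \gamma&\delta\end{smallmatrix}\right),
\]
where $t$ runs over $\F_q^\times$ with both linear factors nonzero. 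For $\bar g\notin N_G(T)$ the rational function $R(t)$ in the argument is nonconstant, and its divisor always contains a point of multiplicity one (the simple zero at $t=0$ when $\gamma,\delta\neq 0$, and otherwise a surviving simple pole), so $R$ is not a constant times a perfect $d$-th power for any $d\ge 2$. Weil's bound for character sums of rational functions then gives $|\Phi(\bar g)|\ll q^{1/2}$ with an absolute constant, and dividing by $\|F\|^2=q-1$ yields $\ll q^{-1/2}$.

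The main obstacle is carrying out this last step uniformly over all configurations of $\bar g$. When one of $\alpha,\beta,\gamma,\delta$ vanishes—so that $\bar g$ lies in the Borel or its opposite but remains non-monomial—the poles of $R$ collide with $0$, $\infty$, or each other and $R$ degenerates to a linear-fractional function; here I would simply evaluate $\Phi(\bar g)$ by hand, each case reducing to a complete sum such as $\sum_{z\neq 0,1}\chi(z)$ and producing $-1$ or $-\chi(\pm1)$, comfortably within the bound. One must check that these are the only degenerations, that $R$ is constant exactly on the monomial cosets (recovering (\ref{matrix1}) and (\ref{matrix2})), and that no non-monomial $\bar g$ makes $R$ a perfect power even when $\chi$ is quadratic—the case in which $V_q$ is the $q$-dimensional twisted Steinberg rather than the full $(q+1)$-dimensional principal series. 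This bookkeeping, rather than any single estimate, is where the care is required.
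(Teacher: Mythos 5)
Your proposal is correct and follows essentially the same route as the paper's proof: reduction to $PGL(2,\F_q)$, the explicit $T$-invariant vector supported on $cd \neq 0$ with value $\chi(\det/cd)/\sqrt{q-1}$, and Weil's bound for the resulting complete character sum after verifying that its argument is not a perfect power. The only cosmetic difference is that the paper writes the sum as $\sum_n \chi\bigl((c+an)(d+bn)n^{q-2}\bigr)$, so a single simple-root argument (using $g \notin WT$ and invertibility) covers all configurations uniformly, whereas your rational-function formulation requires the separate hand-evaluation of the degenerate cases where an entry of $\bar g$ vanishes.
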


\begin{proof}

We may reduce the problem to one for the group $PGL(2,\F_q)$, where $\F_q$ denotes the finite field of $q$ elements.  We let $T$ and $B$ be the diagonal and Borel subgroups of $PGL(2,\F_q)$.  We now think of $\chi$ as a non-trivial character of $\F_q^\times$, and let $(\rho, H)$ denote the corresponding principal series representation of $PGL(2,\F_q)$.  We realise $H$ as the space of functions $f : PGL(2,\F_q) \rightarrow \C$ satisfying

\bes
f \left( \left( \begin{array}{cc} a & b \\ 0 & d \end{array} \right) g \right) = \chi(a/d) f(g)
\ees
with the norm

\bes
\| f \| = \sum_{ g \in B \backslash PGL(2,\F_q) } |f(g)|^2.
\ees

It may be seen that there is a unique vector $v \in H$ that is invariant under $T$, up to scaling, and we may choose it to be represented by the function

\bes
\left( \begin{array}{cc} a & b \\ c & d \end{array} \right) \mapsto \Big\{ \begin{array}{ll} \chi( \det / cd) / \sqrt{q-1}, & cd \neq 0, \\ 0, & cd = 0. \end{array}
\ees
It follows that $\| v \| = 1$, and so $\langle \pi_q(g) \phi'_q, \phi'_q \rangle = \langle \rho(g) v, v \rangle$.  Equation (\ref{matrix1}) is immediate, and (\ref{matrix2}) follows from $\rho(w) v = \chi(-1)v$.  We now assume that

\bes
g = \left( \begin{array}{cc} a & b \\ c & d \end{array} \right) \notin WT.
\ees
An elementary calculation gives

\bes
\langle \rho(g) v, v \rangle = (q-1)^{-1} \chi^{-1}(\det(g)) \sum_n \chi( (c +an)(d + bn) ) \chi^{-1}(n).
\ees
We bound this sum by rewriting it as

\bes
\sum_n \chi( (c +an)(d + bn) n^{q-2} )
\ees
and applying \cite[Ch. 2, Theorem 2.4]{Sc} (see also \cite[Theorem 11.23]{IK}).  We must first check that $(c +an)(d + bn) n^{q-2}$ is not a proper power.  The assumption $g \notin WT$ implies that one or both of $a + cn$ and $b + dn$ have a root distinct from 0.  If they both have the same root distinct from 0, this contradicts the invertability of $g$.  Therefore $(c +an)(d + bn) n^{q-2}$ must have at least one root of multiplicity 1, so it cannot be a power.  As $(c +an)(d + bn) n^{q-2}$ has at most 3 distinct roots, \cite{Sc} or \cite{IK} therefore give

\bes
| \sum_n \chi( (c +an)(d + bn) n^{q-2} ) | \le 2 \sqrt{q},
\ees
which completes the proof of (\ref{matrix3}).

\end{proof}

\end{document}